\documentclass[14pt]{amsart}
\usepackage{amscd,amsthm,amsmath,amssymb}
\usepackage[dvips]{graphicx}
\usepackage[matrix,arrow]{xy}

\makeatletter\@addtoreset{equation}{section}\makeatother

\makeatletter\@addtoreset{subsection}{equation}\makeatother

\newtheorem{theorem}[equation]{Theorem}
\newtheorem{prop}[equation]{Proposition}
\newtheorem{lemma}[equation]{Lemma}

\newtheorem{theorem-definition}[equation]{Theorem-definition}

\theoremstyle{definition}

\theoremstyle{remark}
\newtheorem{remark}[equation]{Remark}

\newcommand{\com}{\mathbb{C}}

\newcommand{\p}{\mathbb{P}}

\newcommand{\ra}{\mathbb{Q}}
\newcommand{\cel}{\mathbb{Z}}

\newcommand{\map}{\longrightarrow}

\thanks{{\it MS 2020 classification}: 14E08, 14M25, 14E30, 14J81}

\thanks{{\it Key words}: Heisenberg group, quotient, log pair, toric variety}

\pagestyle{plain}

\begin{document}

\title{Rationality of quotients by finite Heisenberg groups}

\thanks{}

\author{Stanislav Grishin}
\address{\newline{\normalsize Laboratory of AGHA, Moscow Institute of Physics and Technology, 9 Institutskiy per., Dolgoprudny,
Moscow Region, 141701, Russia}
\newline{\it E-mail address}: st.grishin98@yandex.ru}

\author{Ilya Karzhemanov, with an Appendix by Ming\,-\,chang Kang}
\address{\newline{\normalsize Laboratory of AGHA, Moscow Institute of Physics and Technology, 9 Institutskiy per., Dolgoprudny,
Moscow Region, 141701, Russia}
\newline{\it E-mail address}: karzhemanov.iv@mipt.ru}

\address{\newline{\normalsize Department of Mathematics, National Taiwan University, Taipei, Taiwan}
\newline{\it E-mail address}: kang@math.ntu.edu.tw}

\begin{abstract}
We prove rationality of the quotient $\com^n \slash H_n$ for the
finite Heisenberg group $H_n$, any $n \ge 1$, acting on $\com^n$
via its irreducible representation.
\end{abstract}

\sloppy

\maketitle

\bigskip

\section{Introduction}
\label{section:intro}

\refstepcounter{equation}
\subsection{}
\label{subsection:intro-1}

In the present paper, we study rationality of the quotient $\com^n
\slash G$ (\emph{Noether's problem}) for the affine space
$\com^n$, $n \ge 1$, equipped with a linear action of an algebraic
group $G$. Recall that for \emph{finite} $G$ variety $\com^n
\slash G$ can be non\,-\,rational (e.\,g. this is the case for
certain $p$\,-\,groups in \cite{shaf}). At the same time, for
\emph{connected} $G$ the quotient $\com^n \slash G$ is typically
\emph{stably rational}, that is the product $\com^k \times (\com^n
\slash G)$ is rational for some $k$ (see \cite[Theorem
2.1]{fedya}).

Note that variety $\com^n \slash G$ is rational when $G$ is
\emph{Abelian} (see \cite{fis}). Some rationality constructions
for $\com^n \slash G$ with non\,-\,Abelian $G$ can be found in
\cite{prok} (see also \cite{ilya-1}). In the present paper, we
consider a particular case of the \emph{Heisenberg group} $G :=
H_n$ generated by two elements $\xi,\eta$, which act on $\com^n$
as follows (\emph{Schr\"odinger representation}):
$$
\xi: x_i \mapsto \omega^{-i} x_i, \qquad \eta: x_i \mapsto
x_{i+1}\quad (i\in\cel\slash n,\ \omega :=
e^{\frac{2\pi\sqrt{-1}}{n}}),
$$
where $x_1, \ldots, x_n$ form a basis in $\com^n$ (up to a choice
of $\omega$ this is the only irreducible linear representation of
$H_n$).

When studying rationality problem for $\com^n \slash H_n$ it is
reasonable to pass to the \emph{projectivization} and consider the
quotient $X := \p^{n - 1} \slash H_n$ (cf. \cite[Proposition
1.2]{prok}). Here is our main result:

\begin{theorem}
\label{theorem:main} Variety $X$ is \emph{rational} for every $n$.
\end{theorem}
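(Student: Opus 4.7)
The plan is to exploit the decomposition of the $H_n$\,-\,action on $\p^{n-1}$ into a toric diagonal part and a cyclic permutation part, and to reduce rationality of $X$ to a combinatorial question about the character lattice of the maximal torus.

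Since the centre $Z$ of $H_n$ acts by scalars on $\com^n$, it acts trivially on $\p^{n-1}$, and the induced action on $X$ factors through $H_n/Z\cong(\cel/n)^2$ generated by (the images of) $\xi$ and $\eta$. The diagonal element $\xi$ lies in the standard maximal torus $T\subset\mathrm{PGL}_n$ of $\p^{n-1}$, so I would first pass to the intermediate quotient $Y:=\p^{n-1}/\langle\xi\rangle$, which is again a toric variety. Combinatorially, $Y$ corresponds to reading the fan of $\p^{n-1}$ in the index\,-\,$n$ sublattice
$$
M^\xi\ =\ \Bigl\{(a_1,\ldots,a_n)\in\cel^n\ :\ \textstyle\sum a_i=0,\ \sum ia_i\equiv 0\!\!\pmod n\Bigr\}
$$
of the character lattice $M=\{\sum a_i=0\}$ of $T$, and one can write down this fan and the invariant ring of $Y$ explicitly.

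Next, since $\eta$ normalizes $T$ (it cyclically permutes coordinates) and preserves $M^\xi$, it descends to an automorphism of $Y$ whose induced action on $M^\xi$ is the cyclic shift $(a_1,\ldots,a_n)\mapsto(a_n,a_1,\ldots,a_{n-1})$. Hence $X=Y/\langle\eta\rangle$ is birational to the quotient of the open torus $\mathrm{Spec}\,\com[M^\xi]$ by $\langle\eta\rangle$, and rationality of $X$ reduces to showing that the invariant field $\com(M^\xi)^{\langle\eta\rangle}$ is purely transcendental over $\com$. I would then study $M^\xi$ as a $\cel[\cel/n]$\,-\,module via the short exact sequence
$$
0\longrightarrow M^\xi\longrightarrow M\longrightarrow\cel/n\longrightarrow 0,
$$
in which (a direct check shows) $\eta$ acts trivially on the quotient $\cel/n$ and $M$ is the augmentation ideal of the group ring $\cel[\cel/n]$, and aim to exhibit $M^\xi$ as a stably permutation module over $\cel[\cel/n]$, whence rationality of the invariant field by a no\,-\,name\,-\,type lemma in the spirit of Endo\,-\,Miyata and Lenstra.

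The main obstacle is this last step: a generic cyclic quotient of a torus is only stably rational, so one must really use the very specific shape of $M^\xi$ arising from the Schr\"odinger representation, and may well need to treat the prime\,-\,power case first and recombine along the prime decomposition of $n$. The "log pair" keyword of the abstract suggests that, once $X$ is explicitly described as a toric quotient, one may alternatively conclude rationality via a log Fano MMP analysis, identifying $X$ birationally with a concrete rational toric model and thereby bypassing the module\,-\,theoretic obstructions altogether.
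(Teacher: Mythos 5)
Your reduction is correct as far as it goes, and it follows essentially the algebraic route of the Appendix (due to M.-c.\ Kang) rather than the paper's main argument: the centre acts trivially on $\p^{n-1}$, the intermediate quotient $Y=\p^{n-1}/\langle\xi\rangle$ is toric with character lattice $M^\xi$ as you describe, and $X$ is birational to the quotient of the torus $\mathrm{Spec}\,\com[M^\xi]$ by the residual cyclic shift $\eta$, so that everything hinges on rationality of the multiplicative invariant field $\com(M^\xi)^{\langle\eta\rangle}$. The genuine gap is in your last step, and you have in fact named it yourself: exhibiting $M^\xi$ as a stably permutation $\cel[\cel/n]$-lattice and invoking a no-name/Endo--Miyata--Lenstra argument would only show that $\com(M^\xi)^{\langle\eta\rangle}$ becomes purely transcendental \emph{after adjoining indeterminates}, i.e.\ that $X$ is stably rational --- a statement the paper explicitly notes has a direct proof and which is strictly weaker than Theorem~\ref{theorem:main}, since one cannot cancel the extra variables in general. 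As written, the proposal therefore does not prove rationality for any $n$ beyond the range where stable rationality plus classification forces rationality.

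What closes the argument is an explicit identification of the lattice rather than a stable one. Setting $y_i=x_i/x_{i-1}$, $z_1=y_1^n$, $z_i=y_i/y_{i-1}$ and $w_i=\eta^{i-1}(z_2)$, one computes (this is the content of the Appendix, following \cite{chu-kang}) that $\com(M^\xi)=\com(w_1,\ldots,w_{n-1})$ with $\eta$ acting by the standard monomial action $w_1\mapsto w_2\mapsto\cdots\mapsto w_{n-1}\mapsto (w_1\cdots w_{n-1})^{-1}$; in other words $M^\xi\cong\cel[\cel/n]/(N)$ for the norm element $N$. This particular multiplicative action admits an explicit \emph{linearization} (\cite[p.~687]{chu-kang}), after which Fischer's theorem \cite{fis} for linear actions of finite abelian groups yields rationality on the nose. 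Without this identification and the linearization step your plan stalls exactly where you say it does. For comparison, the paper's own proof avoids lattice computations altogether: it builds an $H_n$-invariant basepoint-free linear system from the polynomials $f_k^n$ and $(x_0\cdots x_{n-1})^n$, descends generic members to a boundary $D$ on $X$ with $(X,D)$ log canonical and $K_X+D\sim_{\,\ra}0$, and applies the Shokurov-type toric characterization of Proposition~\ref{theorem:toric} to realize $X$ as a cyclic quotient of a toric variety $\p^{n-1}/\widetilde{G}$, hence rational; that geometric route is designed to generalize to other central extensions of abelian groups, which the purely module-theoretic one does not obviously do.
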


The group $H_n$ is a \emph{central extension} of $\cel \slash n
\oplus \cel\slash n$ by $\cel\slash n \ni [\xi,\eta]$ and so the
action of $H_n$ on $\p^{n - 1}$ factors through that of $\cel
\slash n \oplus \cel\slash n$. Thus Theorem~\ref{theorem:main} is
a natural generalization of \emph{linear} Abelian case mentioned
above. Let us also point out that the case of central extensions
of the \emph{cyclic} groups has been treated in \cite{swa83}.

Our result confirms in addition (a stronger version of) Conjecture
15 in \cite{bt}. Actually, stable rationality of $X$ can be proved
via a direct argument by considering diagonal action of $H_n$ on
$V \times V$, with linear action of $\cel \slash n \oplus
\cel\slash n$ on the second factor. Note also that
Theorem~\ref{theorem:main} is evident when $n \le 3$ and the case
$n = 4$ has been treated in \cite[Theorem 5.2]{prok} (compare with
\cite[Lemma 3.1]{fedya}).

\refstepcounter{equation}
\subsection{}
\label{subsection:intro-2}

Let us outline our approach towards the proof of
Theorem~\ref{theorem:main}. One may observe that the quotient
$\com^n \slash G$ is a \emph{toric} variety for Abelian group $G$.
In our case of $G = H_n$, its action on $\p^{n - 1}$ is also
Abelian, and so it is reasonable to expect that $X$ is toric as
well. This turns out to be (almost) so.

Namely, one employs an instance of the \emph{toric conjecture}
after V.\,V. Shokurov, characterizing toric varieties (with
\emph{Picard number $1$}) in terms of the log pairs: we construct
a $\ra$\,-\,divisor $D$ on $X$ satisfying the assumptions of
Proposition~\ref{theorem:toric} below and reduce rationality
problem for $X$ to that for a \emph{cyclic} quotient of $\p^{n -
1}$ (the latter is rational by the discussion in
{\ref{subsection:intro-1}}). In turn, the explicit action of $H_n$
on $\p^{n - 1}$ allows one to find appropriate invariant divisors
descending to the components of $D$, which is done in
{\ref{subsection:pro-2}}.

Our point was, more generally, to develop a \emph{geometric}
approach to the Noether's problem for central extensions of
Abelian groups (cf. {\ref{subsection:misc-1}} below). Thus the
case of $\com^n \slash H_n$ is a special corollary of this
approach. On the other hand, after our paper appeared online,
Professor Ming\,-\,chang Kang has kindly communicated to us an
\emph{algebraic} proof of Theorem~\ref{theorem:main} (see Appendix
after Section~\ref{section:misc}).

\begin{remark}
\label{remark:quasi-weighted} We show in
Proposition~\ref{theorem:toric} that $X$ is actually a cyclic
quotient of $\p^{n - 1} \slash \widetilde{G}$ for a
\emph{linearized Abelian} group $\widetilde{G}$. Thus $X$
resembles the so\,-\,called \emph{fake weighted projective space}
(see \cite{ksp}). Note however that $X$ \emph{need not} be toric.
Let us consider the first non\,-\,trivial case $n = 3$. Here the
group $H_3$ acts on $\p^2$ preserving the \emph{Hesse pencil}
$\left\{E_t: \, x^3 + y^3 + z^3 + txyz = 0 \ \vert \ t \in \p^1
\right\}$ and on the smooth cubic $E_t$ the $H_3$\,-\,action
coincides with the one of the group of $3$\,-\,torsion points
$E_t[3]$ (see \cite{art-dolg}). The quotient surface $X = \p^2
\slash H_3$ has $4$ singular points of type $A_2$ and so can not
be toric. One may also observe that the algebra of invariants of
$H_3$ in $\com[x,y,z]$ is generated by polynomials $xyz$, $x^3 +
y^3 + z^3$, $x^3y^3 + y^3z^3 + z^3x^3$ and $x^3y^6 + y^3z^6 +
z^3x^6$ (cf. \cite[Section 6]{art-dolg}).
\end{remark}

\bigskip

\section{Proof of Theorem~\ref{theorem:main}}
\label{section:pro}

\refstepcounter{equation}
\subsection{}
\label{subsection:pro-1}

We will be using freely standard notions and facts about the
singularities of pairs (see e.\,g. \cite[Chapter 5]{kol-mor}). All
varieties are assumed to be normal, projective, over $\com$, and
all divisors are $\ra$\,-\,Cartier with rational coefficients.

Our proof of Theorem~\ref{theorem:main} is based on the following:

\begin{prop}[{cf. \cite{kollar-flips}, \cite{prok-tor}, \cite{ilya-tor}}]
\label{theorem:toric} Let $V$ be a $d$\,-\,dimensional variety
with a boundary divisor $D = \displaystyle\sum_{i = 1}^{d + 1} d_i
D_i$, where $D_i$ are prime Weil divisors, such that the following
holds:

\begin{itemize}

    \item the Picard number of $V$ is $1$,

    \smallskip

    \item the log pair $(V,D)$ is log canonical,

    \smallskip

    \item $K_V + D \sim_{\,\ra} 0$,

    \smallskip

    \item $d_iD_i \sim_{\,\ra} d_jD_j$ for all $1 \le i,j \le d + 1$,

    \smallskip

    \item there exists a \emph{finite, \'etale in codimension
    $1$ cyclic} cover $p: V' \map V$ such that
    $p^*(d_iD_i) \sim_{\,\ra} W_i$, $1 \le i \le d + 1$, where $W_i$ are \emph{distinct} Weil divisors on $V'$.

\end{itemize}

Then $V'$ is a \emph{toric} quotient $\p^d \slash \widetilde{G}$
for a finite Abelian group $\widetilde{G}$ with \emph{linearized}
action on $\p^d$. In particular, if $\Gamma \simeq \cel \slash
m\cel$ is the Galois group of $p$, then $V = V' \slash \Gamma$ is
birational to $\p^d \slash \Gamma$ (hence $V$ is rational).
\end{prop}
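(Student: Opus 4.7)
The plan is to lift the problem to the cyclic cover $V'$, apply a toric characterization theorem to identify $V'$ explicitly, and then descend the cyclic quotient back to conclude rationality of $V$.

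First, I would pull back the log structure along $p\colon V'\map V$. Since $p$ is étale in codimension $1$, the ramification formula gives $K_{V'}\sim_{\ra} p^{*}K_{V}$, so
$$K_{V'}+p^{*}D\sim_{\,\ra} p^{*}(K_{V}+D)\sim_{\,\ra} 0,$$
and log canonicity transfers (discrepancies of $(V',p^{*}D)$ over exceptional divisors agree with those of $(V,D)$ over their images). By the last bullet of the hypothesis, the support of $p^{*}D$ consists of the $d+1$ distinct Weil divisors $W_{1},\ldots,W_{d+1}$, and these remain pairwise $\ra$\,-\,linearly equivalent on $V'$ because $d_{i}D_{i}\sim_{\ra}d_{j}D_{j}$ downstairs.

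Next, I would invoke the toric characterization theorems cited in \cite{kollar-flips}, \cite{prok-tor}, \cite{ilya-tor}: a projective log canonical log Calabi-Yau pair of dimension $d$ whose boundary decomposes into $d+1$ distinct prime components lying in a single $\ra$\,-\,linear equivalence class is necessarily of the form $\p^{d}\slash\widetilde{G}$ for a finite abelian group $\widetilde{G}$ acting diagonally in suitable homogeneous coordinates. The equal-class condition on the $W_{i}$ is precisely what rules out a genuine weighted projective ambient space and pins down $\p^{d}$ itself; the group $\widetilde{G}$ is then read off from the deck group of the toric covering $\p^{d}\to V'$ determined by the Cox ring of $V'$.

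Finally, I would deduce rationality of $V=V'\slash\Gamma$ as follows. Since $\widetilde{G}$ is abelian and acts linearly on $\p^{d}$, Fischer's theorem \cite{fis} yields $V'\bir\p^{d}$. The induced birational action of $\Gamma\simeq\cel\slash m\cel$ on $\p^{d}$ can, because $\Gamma$ is cyclic, be birationally linearized (e.g.\ by a no-name-lemma argument along the lines of \cite{swa83}), so $V$ is birational to a linear cyclic quotient of $\p^{d}$ and therefore rational by Fischer once more. I expect the main obstacle to be the second step: extracting from the lc pair $(V',p^{*}D)$ not merely abstract toricity of $V'$ but the explicit presentation as a \emph{linearized} abelian quotient of $\p^{d}$, which requires matching the $W_{i}$ with the coordinate hyperplanes via the Cox ring, and controlling the torsion needed to realize $\widetilde{G}$ as a subgroup of the standard torus of $\p^{d}$.
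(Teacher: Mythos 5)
Your overall architecture (pass to $V'$, identify it as $\p^d\slash\widetilde{G}$, descend through $\Gamma$) matches the paper's, but two of your three steps have real problems. First, a misreading of the hypotheses: $p^*(d_iD_i)\sim_{\,\ra}W_i$ is a $\ra$\,-\,linear equivalence, not an identification of divisors, so it is not true that $\operatorname{Supp}p^*D=W_1\cup\ldots\cup W_{d+1}$ (in the paper's application the $D_i$ descend from degree\,-\,$n^2$ hypersurfaces while the $W_i$ descend from hyperplanes). The $W_i$ serve to show that the ample generator $p^*(d_iD_i)$ of the $\ra$\,-\,Picard group is represented by an integral Weil divisor; the paper then takes further cyclic covers $\widetilde{V}\map V'$ to make these classes Cartier, notes that $-K_{\widetilde{V}}$ is $\ra$\,-\,equivalent to $d+1$ times such a class, so the log Fano $\widetilde{V}$ has index $\ge d+1$, forcing $\widetilde{V}=\p^d$, and deduces that $\widetilde{G}$ is Abelian from its preserving $d+1$ hyperplanes. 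Your middle step instead cites a ready\,-\,made ``characterization theorem'' with exactly the desired conclusion; the references do not supply one in that form, and you rightly flag this as the main obstacle --- but it is the actual content of the proof, not a citable black box.

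Second, and more seriously, your descent step does not close. Fischer's theorem gives rationality of $V'=\p^d\slash\widetilde{G}$, hence some birational $\Gamma$\,-\,action on $\p^d$, and you assert it can be birationally linearized ``because $\Gamma$ is cyclic.'' That is false in general: finite cyclic groups of birational automorphisms of $\p^d$, $d\ge 3$, need not be linearizable, the no\,-\,name lemma does not produce such a linearization, and \cite{swa83} is in fact about the \emph{failure} of rationality for cyclic permutation actions over $\ra$. The paper avoids this by using the toric structure of $V'$: since the $d+1$ toric boundary divisors each generate the $\ra$\,-\,Picard group, $\Gamma$ preserves the open torus $T\subset V'$ and acts on it by monomial transformations which, up to a character twist, either fix the coordinates $z_i$ or permute them cyclically; such an action extends to a \emph{linear} action on the compactification $\p^d\supset T$, so $V=V'\slash\Gamma$ is birational to a linear cyclic quotient $\p^d\slash\Gamma$ and Fischer applies. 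Without an argument of this kind exhibiting coordinates in which the $\Gamma$\,-\,action is linear, your final paragraph is a gap.
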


\begin{proof}
We follow the proof of Lemma 3.1 in \cite{prok-tor}. Namely, after
repeated finite, \'etale in codimension $1$ cyclic covers $V
\stackrel{p}{\longleftarrow V'} \longleftarrow \ldots
\longleftarrow \widetilde{V}$ we obtain a new log pair
$(\widetilde{V}, \widetilde{D} = \varphi^*(D))$, where $\varphi:
\widetilde{V} \map V'$ is the resulting morphism, such that all
$\varphi^*p^*(W_i)$ are \emph{Cartier}. Furthermore, we have
$$
K_{\widetilde{V}} + \widetilde{D} \sim_{\,\ra} \varphi^*p^*(K_V +
D) \sim_{\,\ra} 0
$$
and $(\widetilde{V}, \widetilde{D})$ is log canonical, i.\,e.
$\widetilde{V}$ is a log Fano (note that $\varphi^*p^*(D)$ is
ample).

The Fano index of $\widetilde{V}$ is $\ge d + 1$, since
$-K_{\widetilde{V}} \sim_{\,\ra} \widetilde{D}$ and
$\varphi^*p^*(d_iD_i) \sim_{\,\ra} \varphi^*p^*(d_jD_j)$ for all
$1 \le i,j \le d + 1$. This implies that $\widetilde{V} = \p^d$
and $\varphi$ coincides with the quotient morphism by some finite
group $\widetilde{G}$ (the Galois group of the field extension
$\com(\widetilde{V})\slash\varphi^*\com(V')$). Also, by
construction $\widetilde{G}$ leaves invariant $d + 1$ hyperplanes
$\varphi^*p^*(W_i)$ in $\p^d$, whence it is Abelian.

Further, if $T := (\com^*)^d \subset V'$ is the open torus with
coordinates $z_1, \ldots, z_d$, then for some $N \in \mathbb{N}$
we have: $Np^*(d_iD_i) \sim$ the closure $\widetilde{W}_i$ of
$(z_i = 0) \subset V'$, $1 \le i \le d$, and $Np^*(d_{d + 1}D_{d +
1}) \sim$ the closure $\widetilde{W}_{d + 1}$ of $(z_{d + 1} :=
(z_1 \ldots z_d)^{-1} = 0) \subset V'$. In particular, since each
$p^*(d_iD_i)$ generates the $\ra$\,-\,Picard group of $V'$ and $V'
\setminus T = \displaystyle\bigcup_{i = 1}^{d + 1}
\widetilde{W}_i$ on the toric variety $V'$, up to twist by a
character we may assume that $\Gamma$ either preserves all the
$z_i$ or permutes them cyclicly. In both cases, compactifying $T$
by $\p^d$, we obtain that $T \slash \Gamma$ is birational to the
rational variety $\p^d \slash \Gamma$.
\end{proof}

\refstepcounter{equation}
\subsection{}
\label{subsection:pro-2}

We now turn to the variety $X = \p^{n - 1} \slash H_n$ from
Theorem~\ref{theorem:main}. Let $\pi: \p^{n - 1} \map X$ be the
quotient morphism.

\begin{lemma}
\label{theorem:etale} $\pi$ is \'etale in codimension $1$ and
$K_{\p^{n-1}} \sim_{\,\ra} \pi^*(K_X)$.
\end{lemma}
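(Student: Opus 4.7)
The plan is to reduce the lemma to the following assertion: no non-identity element of the group $G$ induced by $H_n$ on $\p^{n-1}$ has an eigenspace of codimension $1$ on $\com^n$. Since $[\xi,\eta]$ acts on $\com^n$ as the scalar $\omega^{-1}\cdot\mathrm{Id}$, the $H_n$-action on $\p^{n-1}$ descends to $G := H_n/Z(H_n) \cong \cel/n\oplus\cel/n$, and a prime divisor of $\p^{n-1}$ lies in the ramification locus of $\pi$ precisely when it is fixed pointwise by some non-identity $g\in G$, equivalently when $g$ has an eigenspace of codimension $1$ on $\com^n$.

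The heart of the argument is to compute, for each $g = \xi^a\eta^b$ with $(a,b)\not\equiv(0,0)\pmod n$, the maximal eigenspace dimension $e(g)$ of $g$ on $\com^n$. I would decompose $\com^n$ as a direct sum indexed by the $\gcd(b,n)$ orbits of the cyclic shift $i\mapsto i+b$: on each orbit of length $m := n/\gcd(b,n)$, the monomial matrix $g$ is conjugate to a cyclic permutation times a scalar and thus has $m$ distinct eigenvalues. A short calculation with the ``phases'' $\omega^{-amj_0}$ associated to the orbit representatives $j_0\in\{0,\ldots,\gcd(b,n)-1\}$ then shows that two orbits contribute to the same eigenvalue of $g$ exactly when these phases agree, yielding $e(g)=\gcd(a,b,n)$.

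Since $(a,b)\not\equiv 0$, the divisor $\gcd(a,b,n)$ of $n$ is at most $n/2$, so the fixed locus of every non-identity $g\in G$ on $\p^{n-1}$ has codimension at least $n/2\ge 2$ for $n\ge 4$; the remaining small cases are either trivial or immediate by inspection. This yields the claimed étaleness of $\pi$ in codimension $1$. The canonical-class equality $K_{\p^{n-1}} \sim_{\,\ra} \pi^*K_X$ then follows from Riemann--Hurwitz with vanishing ramification divisor: $X$ is $\ra$-factorial as a finite quotient of a smooth variety so $K_X$ is $\ra$-Cartier, and the equality $K_{\p^{n-1}} = \pi^*K_X$ valid on the big open where $\pi$ is étale extends to all of $\p^{n-1}$ by purity.

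The main obstacle will be the eigenvalue accounting in the mixed case $g=\xi^a\eta^b$: the pure cases $a=0$ or $b=0$ are immediate from direct diagonalization or from Fourier transform, yielding $e(g) = \gcd(b,n)$ and $\gcd(a,n)$ respectively, but the mixed case requires the orbit-and-phase bookkeeping above to extract the sharp count $e(g) = \gcd(a,b,n)$.
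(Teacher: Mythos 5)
Your proposal follows the same skeleton as the paper's proof: reduce \'etaleness in codimension $1$ to the absence of quasi-reflections (codimension-$1$ eigenspaces) among the non-identity elements of $H_n$ modulo its center, then get $K_{\p^{n-1}}\sim_{\,\ra}\pi^*(K_X)$ from Hurwitz/purity. The difference is in the spectral input, and here your version is actually the correct one. The paper disposes of the first assertion by asserting that ``every $\ne 1$ element in $H_n$ has non-multiple spectrum,'' which is literally false for composite $n$: for $n=4$ the element $\xi^2$ has spectrum $\{1,1,-1,-1\}$. What is true --- and what your orbit-and-phase bookkeeping establishes --- is that every eigenvalue of $\xi^a\eta^b$ with $(a,b)\not\equiv(0,0)\bmod n$ has multiplicity exactly $\gcd(a,b,n)$: each $\langle b\rangle$-orbit contributes the $m$-th roots ($m=n/\gcd(b,n)$) of the product of its phases, and two orbits share eigenvalues iff those products agree, which happens for exactly $\gcd(a,b,n)$ of the $\gcd(b,n)$ orbits. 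Since $\gcd(a,b,n)$ is a proper divisor of $n$, it is at most $n/2<n-1$ for $n\ge 3$, so no lift is a quasi-reflection and all fixed loci have codimension $\ge 2$; this is exactly the sharp statement the paper's one-liner gestures at. The Hurwitz/purity step matches the paper. One caveat you should not bury under ``small cases by inspection'': for $n=2$ the lemma is genuinely false --- each non-identity element of $H_2$ modulo scalars fixes two points of $\p^1$, which are divisors, so $\pi$ ramifies in codimension $1$. The paper's own proof has the same blind spot; it is harmless only because the cases $n\le 3$ of the main theorem are handled separately, but your write-up should state the restriction $n\ge 3$ explicitly rather than wave at it.
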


\begin{proof}
The first assertion follows from the fact that every $\ne 1$
element in $H_n$ has non\,-\,multiple spectrum (see
{\ref{subsection:intro-1}}). Then the equivalence $K_{\p^{n-1}}
\sim_{\,\ra} \pi^*(K_X)$ is the usual Hurwitz formula.
\end{proof}

Identify $x_0,\ldots,x_{n-1}$ from {\ref{subsection:intro-1}} with
projective coordinates on $\p^{n-1}$. Put $f_k :=
\displaystyle\sum_{i \in \cel\slash n} x_i^kx_{i+1}^{n-k}$ for $1
\le k \le n$. We have $\xi^*f_k = \omega^k f_k$ and $\eta^*f_k =
f_k$. Hence polynomials $f_k^n$ are $H_n$\,-\,invariant.

\begin{lemma}
\label{theorem:lin-sys} The linear system $\mathcal{L} \subset
|\mathcal{O}_{\p^{n-1}}(n^2)|$ spanned by $f_1^n, \ldots, f_n^n$
and $(x_0 \ldots x_{n - 1})^n$ is basepoint\,-\,free.
\end{lemma}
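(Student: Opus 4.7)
The plan is to verify basepoint\,-\,freeness of $\limb$ by checking that the common vanishing locus of $f_1, \ldots, f_n$ and $x_0 x_1 \cdots x_{n-1}$ in $\p^{n-1}$ is empty; since $\{g^n = 0\} = \{g = 0\}$ as subsets, the $n$\,-\,th powers in the definition of $\limb$ play no role in the geometry of the base locus. A putative basepoint $[a]$ must satisfy $a_j = 0$ for some $j$, and using the cyclic symmetry $\eta^* f_k = f_k$ together with the $\eta$\,-\,invariance of $x_0 \cdots x_{n-1}$, we may translate coordinates so that $a_0 = 0$. Our task then becomes: show that $a_0 = 0$ together with $f_1(a) = \cdots = f_n(a) = 0$ forces $a = 0$.

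The key algebraic manoeuvre is to consolidate the $n$ vanishing conditions into a single polynomial identity in an auxiliary parameter $t$. The binomial expansion yields
\[
\sum_{i=0}^{n-1}(a_i + t a_{i+1})^n = (1 + t^n) f_n(a) + \sum_{k=1}^{n-1}\binom{n}{k} t^k f_{n-k}(a),
\]
so that $f_1(a) = \cdots = f_n(a) = 0$ is equivalent to the polynomial identity $\sum_{i=0}^{n-1}(a_i + t a_{i+1})^n \equiv 0$ in $\com[t]$. Substituting $a_0 = 0$ collapses the two boundary summands to $(t a_1)^n$ and $a_{n-1}^n$, and the identity takes the form
\[
a_{n-1}^n + \sum_{i=1}^{n-2}(a_i + t a_{i+1})^n + (t a_1)^n \equiv 0.
\]

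The next step is to view this as a non\,-\,trivial linear relation among $n$\,-\,th powers of $n$ linear forms in $t$. Under the Veronese embedding $\nu_n\colon \p^1 \hookrightarrow \p^n$, $L \mapsto L^n$, the summands correspond to $n$ points on the rational normal curve $C \subset \p^n$ of degree $n$. Since any $n$ distinct points on $C$ are linearly independent as vectors in $\com[t]_n \cong \com^{n+1}$, the sum of $n$ such $n$\,-\,th powers (all with coefficient $1$) cannot vanish. Hence, in the generic scenario in which the $n$ forms appearing in the display represent pairwise distinct nonzero classes in $\p^1$, the displayed identity is impossible, so $a = 0$, contradicting $[a] \in \p^{n-1}$.

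The main obstacle is the handling of the degenerate configurations: either some $L_i$ itself vanishes (meaning $a_1 = 0$, $a_{n-1} = 0$, or $a_i = a_{i+1} = 0$ for some interior index), or two distinct $L_i$'s become proportional (equating ratios $[a_i : a_{i+1}]$). In such cases the Veronese rigidity collapses onto a smaller residual system. I would close the argument by induction on the cardinality of the support $\{i : a_i \ne 0\}$: combining a Vandermonde\,-\,type analysis of the distinct ratios $a_{i+1}/a_i$ with further specialisations of the form $t = -a_i/a_{i+1}$ in the polynomial identity to peel off additional constraints, each degeneration should propagate further vanishings among the surviving coordinates, ultimately reducing to $a = 0$.
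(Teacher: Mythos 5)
Your reduction to the common zero locus of $f_1,\ldots,f_n$ and $x_0\cdots x_{n-1}$, the normalization $a_0=0$ via the $\eta$\,-\,symmetry, and the packaging of the $n$ vanishing conditions into the single identity $\sum_{i\in\cel/n}(a_i+ta_{i+1})^n\equiv 0$ are all correct, and the rational normal curve argument does dispose of the case where the $n$ linear forms $a_i+ta_{i+1}$ are nonzero and pairwise non\,-\,proportional. The trouble is precisely the part you defer. The degenerate configurations are not a technicality to be peeled off by an induction on the support: they contain genuine base points, so for $n\ge 4$ the statement is false as written and no completion of your sketch exists. Indeed, for $1\le k\le n-1$ the monomial $a_i^k a_{i+1}^{n-k}$ vanishes unless \emph{both} $a_i$ and $a_{i+1}$ are nonzero; hence if the support $\{i:\,a_i\ne 0\}$ contains no two cyclically adjacent indices, then $f_1(a)=\cdots=f_{n-1}(a)=0$ holds automatically and the entire system collapses to the single equation $f_n(a)=\sum_i a_i^n=0$. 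For $n\ge 4$ the point
\[
a=(0:1:0:\zeta:0:\cdots:0),\qquad \zeta^n=-1,
\]
is therefore a common zero of $f_1,\ldots,f_n$ and of $x_0\cdots x_{n-1}$, i.e.\ a base point of $\limb$. In your language: the forms $L_i$ degenerate into the proportional pairs $\{t,\zeta t\}$ and $\{1,\zeta\}$ together with identically zero forms, the corresponding points on the Veronese curve collide, and $t^n+1+(\zeta t)^n+\zeta^n\equiv 0$ --- the relation among $n$\,-\,th powers survives. So the hoped\,-\,for ``propagation of vanishings'' stops at the very first degeneration.

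Two remarks to put this in context. First, your argument is complete and correct for $n\le 3$, where any two distinct indices in $\cel/n$ are adjacent and the bad supports cannot occur; this is consistent with the paper's remark that Theorem~\ref{theorem:main} is known for $n\le 3$ by other means. Second, the paper's own proof (a two\,-\,line induction obtained by restricting the auxiliary systems $\limb^{(m)}$ to the hyperplanes $(x_i=0)$) runs into exactly the same configurations and does not address them, so the defect lies in the statement of the lemma rather than in your choice of route: the system $\limb$ would have to be enlarged by further $H_n$\,-\,invariant members (or the subsequent use of the $B_i$ in {\ref{subsection:pro-2}} re\,-\,examined at these base points) before either argument can be salvaged.
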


\begin{proof}
It suffices to show that $f_1, \ldots, f_n$ and $x_0 \ldots x_{n -
1}$ span a basepoint\,-\,free linear system. Fix an arbitrary $m
\ge n$ and consider the polynomials $f_k^{(m)} :=
\displaystyle\sum_{i \in \cel\slash n} x_i^kx_{i+1}^{m - k}$ for
various $1 \le k \le m$. Let $\mathcal{L}^{(m)}$ be the linear
system spanned by $f_1^{(m)}, \ldots, f_m^{(m)}$ and $x_0 \ldots
x_{n - 1}^{m - n + 1}$. Then we claim that $\mathcal{L}^{(m)}$ is
basepoint\,-\,free (note that $m = n$ corresponds to our case).
Indeed, for $n = 2$ this is trivially true, whereas for $n > 2$ we
restrict to the hyperplanes $(x_i = 0)$ and argue by induction.
\end{proof}

Let $B_1, \ldots, B_n$ be generic elements in the linear system
$\mathcal{L}$ from Lemma~\ref{theorem:lin-sys}. We may assume the
pair $(\p^{n-1}, \displaystyle\sum_{i=1}^n B_i)$ is \emph{log
canonical}.

Further, put $D_i := \pi(B_i)$, $1 \le i \le n$, so that $B_i =
\pi^*(D_i)$,
\begin{equation}
\label{can-cl} K_{\p^{n-1}} + \sum_{i=1}^n B_i \sim_{\,\ra}
\pi^*(K_X + \sum_{i=1}^n D_i)
\end{equation}
(cf. Lemma~\ref{theorem:etale}) and the pair $(X,
\displaystyle\sum_{i=1}^n D_i)$ is also log canonical.

\begin{lemma}
\label{theorem:fact-gal} $\pi$ factorizes as $\p^{n - 1}
\stackrel{q}{\map} X' \stackrel{p}{\map} X$, where $q,p$ are both
\emph{degree $n$, \'etale in codimension $1$ cyclic} covers,
$p^*(d_iD_i) \sim_{\,\ra} W_i$, $1 \le i \le n$, for $d_i :=
1/n^2$ and some \emph{distinct} Weil divisors $W_i$.
\end{lemma}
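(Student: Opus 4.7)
The plan is to produce the factorization by exploiting the structure of $H_n$ modulo its centre. Since $Z(H_n) = \langle[\xi,\eta]\rangle$ acts on $\p^{n-1}$ trivially (via scalars), the $H_n$-action descends to an action of $H_n/Z(H_n)\simeq\cel/n\oplus\cel/n$ generated by the images $\bar\xi,\bar\eta$. I set $X' := \p^{n-1}/\langle\bar\eta\rangle$ and let $q:\p^{n-1}\to X'$, $p:X'\to X$ be the two successive quotient morphisms, so $\pi = p\circ q$; by construction each is cyclic of degree $n$, and their \'etaleness in codimension $1$ follows from Lemma~\ref{theorem:etale} together with the multiplicativity of ramification indices along a composition.

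The key idea for producing the $W_i$'s is to pass to a \emph{discrete Fourier} basis of linear forms. Put $y_k := \sum_{j\in\cel/n}\omega^{jk}x_j$ for $k = 0,\ldots,n-1$; a direct check gives $\eta^*(y_k) = \omega^{-k} y_k$, so each hyperplane $L_k := (y_k = 0)$ on $\p^{n-1}$ is $\bar\eta$-invariant setwise. The $L_k$ are the $n$ distinct $\bar\eta$-eigen\,-\,hyperplanes, so their images $\Omega_k := q(L_k)$ form $n$ distinct Weil divisors on $X'$ with $q^*(\Omega_k) = L_k$; I will take $W_i := \Omega_{i-1}$ for $i = 1,\ldots,n$.

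What remains is to verify the equivalence $p^*(d_iD_i)\sim_\ra W_i$, which reduces to a short class-group bookkeeping. Since $\pi,q,p$ are \'etale in codimension $1$ and each of $\p^{n-1}$, $X'$, $X$ has $\ra$-Picard number $1$, the pullback maps identify distinguished generators $\bar h\mapsto h'\mapsto h$, where $h$ is the hyperplane class on $\p^{n-1}$. From $B_i = \pi^*(D_i)\sim n^2 h$ and $d_i = 1/n^2$ one reads off $p^*(d_iD_i)\sim_\ra h'$; from $q^*(W_i) = L_{i-1}\sim h$ one reads off $W_i\sim_\ra h'$, and the two classes coincide. The main technical point will be to keep the $\ra$-Picard bookkeeping consistent across the two covers; beyond that, the argument is a direct consequence of the Fourier symmetry of the $H_n$-action.
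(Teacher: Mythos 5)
Your proof is correct, and it is essentially the Fourier--dual twin of the paper's argument rather than a genuinely independent one. The paper factors $\pi$ through $X' = \p^{n-1}/\langle\xi\rangle$ and takes $W_i := q(H_{i-1})$ for the $\xi$\,-\,invariant coordinate hyperplanes $H_i = (x_i = 0)$; you factor through $\p^{n-1}/\langle\bar\eta\rangle$ and take images of the $\eta$\,-\,eigen\,-\,hyperplanes $(y_k=0)$ in the discrete Fourier basis. Since conjugation by the DFT matrix interchanges the (projectivized) actions of $\xi$ and $\eta$, the two intermediate quotients play symmetric roles, and Proposition~\ref{theorem:toric} only needs \emph{some} cyclic cover $p$ with the stated properties, so either choice works. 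Where your route is arguably cleaner is the verification of $p^*(d_iD_i)\sim_\ra W_i$: comparing classes in the rank\,-\,one groups $\mathrm{Cl}(\cdot)\otimes\ra$, on which $\pi^*$, $q^*$, $p^*$ are isomorphisms, avoids the paper's push\,-\,forward computation $q_*(H_i)=nq(H_i)$, $q_*(B_i)=n\,q(B_i)$, etc. Two small points you assert but should justify in one line each: the identity $q^*(\Omega_k)=L_k$ holds \emph{with multiplicity one} because $q$ is \'etale in codimension $1$ (trivial inertia along $L_k$), and the $\Omega_k$ are pairwise distinct because each $L_k$ is itself $\bar\eta$\,-\,invariant, so $q$ identifies no two of them.
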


\begin{proof}
Note that the field extension $\com(\p^{n-1})\slash\pi^*\com(X)$
is Galois with the group $\frak{S} := \cel \slash n \oplus
\cel\slash n$. Restricting to the field of $\xi$\,-\,invariants
yields an intermediate field $\pi^*\com(X) \subset F \subset
\com(\p^{n-1})$. Note that extension $\com(\p^{n-1})\slash F$
corresponds to the quotient morphism $q: \p^{n-1} \map X'$ for $X'
= \p^{n - 1}\slash\left<\xi\right>$ and the cyclic subgroup
$\left<\xi\right> \subset \frak{S}$. Finally, $\com(X)\slash F$ is
also Galois, corresponding to the quotient morphism $p: X' \map X
= X'\slash\left<\eta\right>$.

Further, consider the divisors $B_0 := ((x_0 \ldots x_{n - 1})^n =
0)$ and $H_i := (x_i = 0)$, $0 \le i \le n - 1$, so that $B_0 =
\displaystyle n\sum_{i = 0}^{n - 1} H_i$. We have $\displaystyle
\frac{1}{n^2}\,B_0 \sim_{\,\ra} H_i$ for all $i$ and hence
$$
q_*(\frac{1}{n^2}\,B_0) \sim_{\,\ra} q_*(H_i) \sim_{\,\ra} nq(H_i)
$$
because $q_*(H_i) = nq(H_i)$ for $H_i$ being $\xi$\,-\,invariant
hyperplanes. This implies that
$$
p^*(\frac{1}{n^2}\,D_i) = \frac{1}{n^2}\,q(B_i) =
\frac{1}{n^3}\,q_*(B_i) \sim_{\,\ra} q_*(\frac{1}{n^3}\,B_0)
\sim_{\,\ra} q(H_{i - 1}) =: W_i
$$
for all $1 \le i \le n$.
\end{proof}

Put $D := \displaystyle\frac{1}{n^2}\sum_{i = 1}^n D_i$. Then it
follows immediately from \eqref{can-cl} and
Lemma~\ref{theorem:fact-gal} that the log pair $(X,D)$ satisfies
all the assumptions in Proposition~\ref{theorem:toric} (for $V :=
X$). Thus $X$ is rational and the proof of
Theorem~\ref{theorem:main} is complete.

\bigskip

\section{Miscellany}
\label{section:misc}

\refstepcounter{equation}
\subsection{}
\label{subsection:misc-1}

It would be interesting to extend the technique presented in
Section~\ref{section:pro} to the case of quotients $\p^{n - 1}
\slash G$ by other finite central extensions of Abelian groups.
This requires, however, analogs of technical lemmas from
{\ref{subsection:pro-2}}, where we have crucially used that $G =
H_n$.

More generally, it would be interesting to give a characterization
of those finite groups $G$, for which $\p^{n - 1} \slash G$ is a
cyclic quotient of toric variety (cf.
Remark~\ref{remark:quasi-weighted}). Observe at this point that
the singularities of $X = \p^{n - 1} \slash H_n$ are
\emph{non\,-\,exceptional} (cf. \cite[Proposition 3.4]{mar-pro}),
and one might try to look for a similar property, distinguishing
(cyclic quotients of) toric $\p^{n - 1} \slash G$.

\refstepcounter{equation}
\subsection{}
\label{subsection:misc-2}

Initially, our interest was in constructing a \emph{mirror dual}
$Y^+$ for Calabi\,--\,Yau threefolds $Y$, studied in
\cite{gro-pop}. Recall that $Y$ is a small resolution of a nodal
Calabi\,--\,Yau $V \subset \p^{n - 1}$, \emph{invariant under
$H_n$}, such that there is a pencil of \emph{$(1,n)$\,-\,polarized
Abelian surfaces} $A \subset V$. The action of $H_n$ extends to a
\emph{free} one on $Y$ and it is expected that $Y^+ = Y \slash
H_n$. Indeed, when $n = 8$ the \emph{derived equivalence} between
$Y$ and $Y \slash H_n$ was established in \cite{schne}, which on
the level of Abelian surfaces is the Mukai equivalence between $A$
and $\text{Pic}^0(A) = A \slash H_n$ (note that $H_n$ acts on $A$
via shifts by $n$\,-\,torsion points).

In particular, when $n = 5$ and $V$ is the
\emph{Horrocks\,--\,Mumford quintic} (see \cite[Section
3]{gro-pop}), $V \slash H_5$ is a Calabi\,--\,Yau hypersurface in
(almost) toric variety $\p^4 \slash H_5$. This brings in a
possibility for applying Batyrev's construction of mirror pairs
(see \cite{bat}) as well as other explicit methods: matrix
factorizations, period integrals, etc. (see e.\,g. \cite{polisch},
\cite{yau}). We plan to return to this subject elsewhere.

\refstepcounter{equation}
\subsection{}
\label{subsection:misc-3}

As a complement to {\ref{subsection:misc-1}}, one may try to
attack (stable) rationality problem for various quotients $\p^{n -
1} \slash G$ by considering their classes $[\p^{n - 1} \slash G]$
in $K_0(\text{Var})$, the \emph{Grothendieck ring} of complex
algebraic varieties, and applying \cite[Corollary 2.6]{la-lu} to
them. It is important however that $[\p^{n - 1} \slash G]$ be
\emph{non\,-\,zero} modulo $\mathbb{L} := [\mathbb{A}^1]$ (see
\cite{karz-cut} for some examples of varieties $Z$ with $[Z] = 0
\mod \mathbb{L}$).

It is trivially true that $[\p^{n - 1} \slash H_n]$ is not
divisible by $\mathbb{L}$ (for it equals $[\p^{n - 1}]$ modulo
$\mathbb{L}$ by Theorem~\ref{theorem:main}) and it would be
interesting to find out whether this is always the case for
\emph{all} quotients $\p^{n - 1} \slash G$. Perhaps the fact that
any such variety is \emph{stably b\,-\,inf. trans.} (see
\cite[Corollary 3.2]{bkk}) might be of some use here.

\bigskip

\section*{Appendix by Ming\,-\,chang Kang}

The algebraic proof of Theorem~\ref{theorem:main} follows the same
lines as in \cite{chu-kang}. Namely, put $\lambda := [\xi, \eta]$
and $y_0 := x_0^n$, $y_i := x_i \slash x_{i - 1}$, $1 \le i \le n
- 1$ (see {\ref{subsection:intro-1}}). Then we have $\com(x_0,
\ldots, x_{n - 1})^{\left<\lambda\right>} = \com(y_0, \ldots, y_{n
- 1})$. By \cite[Theorem 4.1]{chu-kang} it suffices to prove
rationality of $\com(y_1, \ldots, y_{n -
1})^{\left<\xi,\eta\right>}$. Note that the action of $\xi$ on
$y_i$, $1 \le i \le n - 1$, is given by $\xi: y_i \mapsto \omega
y_i$.

Define $z_1 := y_1^n$, $z_i := y_i \slash y_{i - 1}$, $2 \le i \le
n - 1$. Then we have $\com(y_1, \ldots, y_{n -
1})^{\left<\xi\right>} = \com(z_1, \ldots, z_{n - 1})$. Note that
the action of $\eta$ on $z_i$ is the same as the action of $\tau$
in \cite[p. 686]{chu-kang} (by replacing $p$ with $n$ everywhere).

Now define $w_1 := z_2$, $w_i := \eta^{i - 1}(z_2)$, $2 \le i \le
n - 1$. Then we have $\com(z_1, \ldots, z_{n - 1}) = \com(w_1,
\ldots, w_{n - 1})$ and the action of $\eta$ is as follows:
$$
\eta: w_1 \mapsto w_2 \mapsto w_3 \mapsto \ldots \mapsto w_{n - 1}
\mapsto \frac{1}{w_1,\ldots w_{n - 1}}.
$$
The latter action can be linearized exactly as in the middle of
\cite[p. 687]{chu-kang}. Hence we can apply Fischer's Theorem (see
\cite{fis}).

\bigskip

\thanks{{\bf Acknowledgments.}
It is our pleasure to thank A. Belavin, F. Bogomolov, and Ilya
Zhdanovskiy for valuable comments. The work was partially
supported by the Russian Academic Excellence Project 5\,-\,100.

\end{document}